\documentclass[12pt]{amsart}
\usepackage{enumerate,amsmath,amssymb}
\usepackage{amsthm}

\newtheorem{theorem}{Theorem}[section]
\newtheorem{lemma}[theorem]{Lemma}

\newtheorem{proposition}[theorem]{Proposition}

\theoremstyle{definition}
\newtheorem{definition}[theorem]{Definition}

\allowdisplaybreaks[2]

\begin{document}
\title[Krein parameters]{Krein parameters of fiber-commutative coherent configurations}
\author{Keiji Ito}
\address{Research Center for Pure and Applied Mathematics\\
Graduate School of Information Sciences\\
Tohoku University}
\email{k.ito@ims.is.tohoku.ac.jp}

\author{Akihiro Munemasa}
\address{Research Center for Pure and Applied Mathematics\\
Graduate School of Information Sciences\\
Tohoku University}
\email{munemasa@math.is.tohoku.ac.jp}

\date{December 13, 2019}
\keywords{coherent configuration, association scheme, Krein parameter,
Krein condition, generalized quadrangle, absolute bound}
\subjclass[2010]{05B25, 05C50, 05E30}

\maketitle
\begin{abstract}
For fiber-commu\-tative coherent configurations, we show that
Krein parameters can be defined essentially uniquely. 
As a consequence, the general
Krein condition reduces to positive semidefiniteness of finitely
many matrices determined by the parameters of a coherent
configuration. We mention its implications in the coherent
configuration defined by a generalized quadrangle. We also
simplify the absolute bound using the matrices of Krein parameters.
\end{abstract}

\section{Introduction}

Coherent configurations are defined by D. G. Higman in \cite{Hi1}.
A special class of coherent configurations, called homogeneous coherent configurations,
are also known as association schemes. The Krein condition asserts that
Krein parameters of a commutative association scheme
are non-negative real numbers (see \cite[Theorem~3.8]{BI}), 
and it can rule out the existence of some
putative association schemes. A generalization of this property was formulated
by Hobart \cite{Ho}, who proved that certain matrices determined by the
parameters are positive semidefinite. However, some complication arises due to 
the fact that an analogue of Krein parameters cannot be defined uniquely. This
seems to be an obstacle
to develop a nice theory for coherent configurations parallel to
commutative association schemes. 
In this paper, we restrict ourselves to fiber-commu\-tative coherent configurations.
This restriction enables us to define a basis of matrix units almost uniquely
for each simple two-sided ideals of the adjacency algebra $\mathfrak{A}$ of the
coherent configuration. Since the algebra $\mathfrak{A}$ is closed with respect
to the entry-wise product, we can define Krein parameters as the expansion coefficients
of the entry-wise product of two basis elements. These Krein parameters can then be
collected in a matrix form, which we call the matrix of Krein parameters, and
we show that this is a positive semidefinite hermitian matrix. This is a
generalization of the Krein condition for commutative association schemes to
fiber-commu\-tative coherent configurations. Our main theorem asserts that, the
general Krein condition formulated by Hobart \cite{Ho} which in general consists
of infinitely many inequalities, is equivalent to the positive semidefiniteness
of the finitely many matrices of Krein parameters. As an illustration, we consider
the fiber-commu\-tative coherent configurations defined by generalized quadrangles.
We write down all matrices of Krein parameters, and show that one cannot derive
any consequence other than the well-known inequalities established in \cite{Hi3,Hi2}.
Moreover, we also simplify the absolute bounds for fiber-commu\-tative
coherent configurations due to \cite{HW}.

This paper is organized as follows. 
In Section~2, we prepare notation and formulate the Krein condition for
coherent configurations.
In Section~3,
we define matrices of Krein parameters for
fiber-commu\-tative coherent configurations, and give our main theorem.
In Section~4,
we compute the matrices of Krein parameters for the coherent configuration
obtained from a generalized quadrangle.
In Section~5,
we apply our result to simplify the absolute bounds for
fiber-commu\-tative coherent configurations.

\section{Preliminaries}

For a finite set $X$, we denote by $\mathrm{M}_X(\mathbb{C})$
the algebra of square matrices with entries in $\mathbb{C}$ whose rows and
columns are indexed by $X$. We also denote by $J_X$ the all-ones matrix
in $\mathrm{M}_X(\mathbb{C})$.

\begin{definition}\label{dfn:coco}
Let $X= \coprod_{i =1}^f X_i$ be a partition of a finite set $X$.
For all pairs $i,j \in\{1,\dots, f\}$, let 
$X_i \times X_j=\coprod_{k=1}^{r_{i,j}} R_{i,j,k}$ be a partition, and let
$\mathcal{I} =\{(i,j, k) \mid 1 \leq i,j \leq f, 1 \leq k \leq r_{i,j} \}$.
For $I\in\mathcal{I}$, let $A_{I}\in\mathrm{M}_X(\mathbb{C})$ 
denote the adjacency matrix of the relation $R_{i,j,k}\subset X\times X$.
The pair $\mathcal{C} = (\mathcal{X}, \{R_I\}_{I \in \mathcal{I}} )$ is
called a {\it coherent configuration} if the following conditions hold:
\begin{enumerate}
\item
For each $i \in \{1 ,\dots , f\}$, there exists $k \in \{1 , \dots , r_{i,i} \}$
such that $A_{i,i,k}= I_{X_i}$, where $I_{X_i}$ is the $\{0,1\}$-matrix indexed
by $X \times X$ with $1$ on $(x,x)$-entry for $x \in X_i$ and $0$ otherwise.
\item
$\sum_{I \in \mathcal{I}} A_I = J_X$.
\item
For any $I \in \mathcal{I}$, there exists $I' \in \mathcal{I}$
such that $A_I^T=A_{I'}$.
\item
For any $I,J \in \mathcal{I}$,
$A_I A_J = \sum_{K \in \mathcal{I}} p_{I,J}^K A_K$ for some scalars $p_{I,J}^K$.
\end{enumerate}  
Each subset $X_i \subset X$ is called
a {\it fiber}, $|\mathcal{I}|$ is called the {\it rank}, and
$p_{I,J}^K$ are called the {\it parameters} of $\mathcal{C}$.
\end{definition}

For the remainder of this section, we fix a coherent configuration $\mathcal{C}$
as in Definition~\ref{dfn:coco}.
Let $\mathfrak{A}$ be the subalgebra of $\mathrm{M}_X(\mathbb{C})$
spanned by $\{A_I \mid I \in \mathcal{I}\}$.
This algebra is called the {\it adjacency algebra} of $\mathcal{C}$.
The subspace $\mathfrak{A}_{k,l}$ is defined as the subspace consisting of
matrices whose entries are zero except those indexed by $X_k \times X_l$.
For brevity, we write $\mathfrak{A}_k= \mathfrak{A}_{k,k}$.
Each $\mathfrak{A}_k$ forms the adjacency algebra of a coherent configuration with
single fiber, that is, an association scheme on $X_k$.

\begin{definition}
The coherent configuration $\mathcal{C}$ is said to be {\it fiber-commu\-tative}
if the algebra $\mathfrak{A}_k$ is commutative for all $k \in \{1, \dots ,f \}$.
Similarly, $\mathcal{C}$ is said to be {\it fiber-symmetric}
if the algebra $\mathfrak{A}_k$ consists only of symmetric matrices
 for all $k \in \{1, \dots ,f \}$.
\end{definition}

Let $\{ \Delta_s \mid s \in S \}$ be a set of representatives of
all irreducible matrix representations of $\mathfrak{A}$ over $\mathbb{C}$
 satisfying $\Delta_s(A)^* = \Delta_s(A^*)$ for any $A \in \mathfrak{A}$,
where $*$ denotes the transpose-conjugate.
Since $\mathfrak{A}$ is semisimple,
$\mathfrak{A}$ is completely reducible.
In other words, $\mathfrak{A}$ is decomposed into 
\[
\mathfrak{A} = \bigoplus_{s \in S} \mathfrak{C}_s,
\]
where $\mathfrak{C}_s$ is a simple two-sided ideal affording $\Delta_s$. 
Moreover, for each $s \in S$, $\Delta_s |_{\mathfrak{C}_s}$ is an isomorphism
from $\mathfrak{C}_s$ to $\mathrm{M}_{e_s}(\mathbb{C}) $.
This implies that
there exists a basis $\{ \varepsilon_{i,j}^s \in \mathfrak{A} \mid i,j  \in F_s\}$
of $\mathfrak{C}_s$ satisfying
\begin{align} \label{eq_mat_u}
\varepsilon_{i,j}^s \varepsilon_{k,l}^s 
&= \delta_{j,k} \varepsilon_{i,l}^s,\\ 
{\varepsilon_{i,j}^s}^* &= \varepsilon_{j,i}^s, \label{eq_mat_u2} 
\end{align}
where $|F_s|=e_s$.
Note that there is a good reason not to take $F_s=\{1, \dots , e_s \}$.
This will become clear after Lemma~\ref{dim=1}.
By \cite[Theorem~8]{HW},
we can choose $\varepsilon_{i,j}^s$ in such a way that
\begin{equation} \label{HWThm8}
\varepsilon_{i,j}^s \in \bigcup_{k,l=1}^f \mathfrak{A}_{k,l}
\quad(i,j\in F_s,\;s\in S).
\end{equation}
Note that, since $\mathfrak{A}_{k,l} \mathfrak{A}_{k',l'}=0$
if $l \neq k'$, \eqref{HWThm8} implies
\begin{equation} \label{diag_block}
\varepsilon_{i,i}^s \in \bigcup_{k=1}^f \mathfrak{A}_k
\quad(i\in F_s,\;s\in S).
\end{equation}
This is also mentioned in the proof of \cite[Theorem~8]{HW}.

Since $\overline{\mathfrak{C}_s}$ is also a simple two-sided ideal,
there exists $\hat{s} \in S$ such that
$\mathfrak{C}_{\hat{s}} = \overline{\mathfrak{C}_s}$.
If $\mathcal{C}$ is fiber-symmetric, then $s = \hat{s}$ for all $s \in S$
by \eqref{diag_block}.
Note that
$\{\overline{\varepsilon_{i,j}^s} \mid i,j \in F_s \}$ is a basis
of $\mathfrak{C}_{\hat{s}}$ satisfying \eqref{eq_mat_u}.
Since $\overline{\mathfrak{A}_{k,l}} = \mathfrak{A}_{k,l}$
for all $k,l \in \{1, \dots , f \}$,
\[
\overline{ \varepsilon_{i,j}^s} \in
\bigcup_{k,l=1}^f \mathfrak{A}_{k,l}
\quad(i,j\in F_s,\;s\in S).
\]
This implies that we can choose $\{\varepsilon_{i,j}^s\mid i,j\in F_s\}$
and $\{\varepsilon_{i,j}^{\hat{s}}\mid i,j\in F_s\}$
in a manner compatible with complex conjugation.

\begin{definition}\label{def_mat_un}
For each $s \in S$, a basis $\{\varepsilon_{i,j}^s \mid i,j \in F_s \}$
of $\mathfrak{C}_s$ is called {\it a basis of matrix units
for $\mathfrak{C}_s$} if \eqref{eq_mat_u} and \eqref{HWThm8} hold.
If $\{\varepsilon_{i,j}^s \mid i,j \in F_s \}$ is a basis of matrix units
for $\mathfrak{C}_s$ for each $s \in S$,
then their union is called {\it bases of matrix units for $\mathfrak{A}$}
provided that $F_s=F_{\hat{s}}$ and
\[
\overline{\varepsilon_{i,j}^s} = \varepsilon_{i,j}^{\hat{s}} \quad (i,j \in F_s , s \in S).
\]
\end{definition}

Note that bases of matrix units are not determined uniquely (see \cite{Hi1}),
but we will see later that they are essentially unique
for the fiber-commu\-tative case.

\begin{lemma}\label{Z(A)}
The center of $\mathfrak{A}$ is contained in
$\bigoplus_{k=1}^f \mathfrak{A}_k$.
\end{lemma}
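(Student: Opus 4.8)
The plan is to exploit the partial identity matrices $I_{X_m}$, which belong to $\mathfrak{A}$ by condition (i) of Definition~\ref{dfn:coco}, together with the block decomposition of $\mathfrak{A}$ induced by the partition $X = \coprod_{m} X_m$. First I would record that, since every adjacency matrix $A_I$ is supported on a single block $X_i \times X_j$, the adjacency algebra decomposes as a direct sum of vector spaces
\[
\mathfrak{A} = \bigoplus_{k,l=1}^f \mathfrak{A}_{k,l},
\]
so that any $Z \in \mathfrak{A}$ can be written uniquely as $Z = \sum_{k,l} Z_{k,l}$ with $Z_{k,l} \in \mathfrak{A}_{k,l}$ equal to the restriction of $Z$ to the block indexed by $X_k \times X_l$.

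The key observation is that left multiplication by $I_{X_m}$ annihilates every block-row except the $m$-th, while right multiplication by $I_{X_m}$ annihilates every block-column except the $m$-th; concretely,
\[
I_{X_m} Z = \sum_{l=1}^f Z_{m,l}, \qquad Z I_{X_m} = \sum_{k=1}^f Z_{k,m}.
\]
Now suppose $Z$ lies in the center of $\mathfrak{A}$. Since $I_{X_m} \in \mathfrak{A}$, centrality gives $I_{X_m} Z = Z I_{X_m}$ for every $m$. Comparing the two displayed expressions block by block, the left-hand side is supported on block-row $m$ and the right-hand side on block-column $m$, and these supports overlap only in the diagonal block indexed by $X_m \times X_m$. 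Hence $Z_{m,l} = 0$ for all $l \neq m$ (from the off-diagonal blocks of block-row $m$) and $Z_{k,m} = 0$ for all $k \neq m$.

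Letting $m$ range over $\{1, \dots, f\}$ forces $Z_{k,l} = 0$ whenever $k \neq l$, so that $Z = \sum_{k} Z_{k,k} \in \bigoplus_{k=1}^f \mathfrak{A}_k$, as required. I do not anticipate a genuine obstacle here: the only point that needs care is confirming that multiplication by the partial identities realizes the block projections exactly as claimed, which is immediate from $I_{X_m}$ acting as the identity on $X_m$ and as zero elsewhere. Everything else is bookkeeping with the block grading, and the argument in fact invokes only condition (i) of Definition~\ref{dfn:coco} to guarantee $I_{X_m} \in \mathfrak{A}$, rather than the full list of coherent-configuration axioms.
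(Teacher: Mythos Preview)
Your argument is correct. It is, however, a genuinely different route from the paper's. The paper deduces the lemma from the Wedderburn decomposition already in place: the center of $\mathfrak{A}$ is spanned by the central idempotents $\sum_{i\in F_s}\varepsilon_{i,i}^s$, and by \eqref{diag_block} each $\varepsilon_{i,i}^s$ lies in some $\mathfrak{A}_k$, so the central idempotents (and hence the whole center) lie in $\bigoplus_k\mathfrak{A}_k$. Your proof instead bypasses the matrix units entirely and works directly with the block grading and the partial identities $I_{X_m}$, using only axiom~(i). The advantage of your approach is that it is self-contained and does not depend on \cite[Theorem~8]{HW} or on the existence of a basis of matrix units satisfying \eqref{HWThm8}; the advantage of the paper's approach is that, once that machinery is set up (as it must be for the rest of the paper anyway), the lemma is a one-line consequence.
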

\begin{proof}
This is immediate from \eqref{diag_block},
since $\sum_{i \in F_s} \varepsilon_{i,i}^s$ is the central idempotent
corresponding to $\mathfrak{C}_s$.
\end{proof}

Let $J_{k,l}$ be the matrix in $\mathfrak{A}$ with $1$ in all entries
indexed by $X_k \times X_l$ and $0$ otherwise.
Without loss of generality,
we may assume that
$\mathfrak{C}_1 = \mathfrak{A} \varepsilon_1 \mathfrak{A}$,
where
\begin{equation}
\varepsilon_1
=  \sum_{k=1} \frac{1}{|X_k|} J_{k,k}.
\label{centralidem}
\end{equation}
This implies that we may also assume that
\begin{equation}
\varepsilon_{k,l}^{1} = \frac{1}{\sqrt{|X_k||X_l|}} J_{k,l}
\label{matunit}
\end{equation}
for any $k,l \in  F_1$, where $F_1=\{1, \dots, f\}$.

For the reminder of this section, we fix bases of matrix units
$\{\varepsilon_{i,j}^s \mid s \in S,\; i,j \in F_s \}$ for $\mathfrak{A}$.
Let 
$
\Lambda_s = F_s^2 \times \{s\}
$
for each $s \in S$ and
$
\Lambda = \coprod_{s \in S} \Lambda_s.
$
Moreover, we denote
$\varepsilon_\lambda = \varepsilon_{i,j}^s $
for $\lambda =(i,j,s) \in \Lambda$.
Define $n_{\lambda}= \sqrt{|X_k||X_l|}$,
where $\lambda \in \Lambda$ and $\varepsilon_{\lambda} \in \mathfrak{A}_{k,l}$.
Let $\circ$ denote the Hadamard (entry-wise) product of matrices.
Since $\mathfrak{A}$ is closed with respect to $\circ$,
there exist $q_{\lambda, \mu}^\nu \in \mathbb{C}$ such that
\begin{align}
n_{\lambda} \varepsilon_\lambda \circ n_{\mu} \varepsilon_\mu
= \sum_{\nu \in \Lambda} q_{\lambda, \mu}^\nu n_{\nu} \varepsilon_\nu. \label{def_krein}
\end{align}

\begin{definition}
The complex numbers $q_{\lambda, \mu}^\nu $ appearing in \eqref{def_krein}
are called {\it Krein parameters with respect to bases of matrix units 
$ \{\varepsilon_\lambda \mid \lambda \in \Lambda \}$.} 
\end{definition}

Let $\mathcal{P}_F$ denote the set of
the all positive semidefinite hermitian matrices in $\mathrm{M}_F( \mathbb{C})$.

\begin{theorem}[Krein conditions {\cite[Lemma~1]{Ho}}]\label{krein}
For any $s,t,u \in S$, 
$B = (b_{i,j}) \in \mathrm{M}_{F_s}(\mathbb{C})$ and
$C =(c_{i,j}) \in \mathrm{M}_{F_t}(\mathbb{C})$,
let $\tilde{Q}_{s,t}^u(B,C)$ denote the matrix in $\mathrm{M}_{F_u}(\mathbb{C})$
whose $(m,n)$-entry is
\begin{equation}
\sum_{i,j \in F_s} \sum_{k,l \in F_t} b_{i,j} c_{k,l}
q_{(i,j,s),(k,l,t)}^{(m,n,u)} . \label{tilde}
\end{equation}
Then 
\begin{equation} \label{krein_con}
\tilde{Q}_{s,t}^u(B,C) \in \mathcal{P}_{F_u} \quad  
(B \in \mathcal{P}_{F_s},\; C \in \mathcal{P}_{F_t}).
\end{equation}
\end{theorem}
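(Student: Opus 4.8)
The plan is to deduce the Krein condition from the Schur product theorem by converting positive semidefiniteness of the coefficient matrices $B,C$ into positive semidefiniteness of genuine matrices in $\mathrm{M}_X(\mathbb{C})$, forming their Hadamard product, and reading off the component in each $\mathfrak{C}_u$.

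The technical heart is the following dictionary, which I would prove first. For $u\in S$ and a hermitian $B'=(b'_{m,n})\in\mathrm{M}_{F_u}(\mathbb{C})$, put $E(B')=\sum_{m,n\in F_u}b'_{m,n}\varepsilon_{m,n}^u\in\mathfrak{C}_u$. I claim that $E(B')$ is hermitian, and that $E(B')$ is positive semidefinite in $\mathrm{M}_X(\mathbb{C})$ if and only if $B'$ is positive semidefinite. Hermiticity follows at once from \eqref{eq_mat_u2}. For the equivalence, note that by \eqref{eq_mat_u} and \eqref{eq_mat_u2} each $\varepsilon_{m,m}^u$ is an orthogonal projection and each $\varepsilon_{m,n}^u$ is a partial isometry carrying $\operatorname{range}(\varepsilon_{n,n}^u)$ isometrically onto $\operatorname{range}(\varepsilon_{m,m}^u)$; in particular these ranges share a common dimension $d_u$. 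Fixing an orthonormal basis $\xi_1,\dots,\xi_{d_u}$ of $\operatorname{range}(\varepsilon_{n_0,n_0}^u)$ for one $n_0\in F_u$ and setting $w_m^{(a)}=\varepsilon_{m,n_0}^u\xi_a$ yields an orthonormal basis of $V_u:=\bigoplus_{m\in F_u}\operatorname{range}(\varepsilon_{m,m}^u)$ on which $\varepsilon_{m,n}^u$ sends $w_n^{(a)}\mapsto w_m^{(a)}$ and which annihilates $V_u^\perp$. Hence, in this basis, $E(B')$ is $B'\otimes I_{d_u}$ on $V_u$ and $0$ on $V_u^\perp$, so $E(B')\succeq 0$ if and only if $B'\otimes I_{d_u}\succeq 0$, i.e.\ if and only if $B'\succeq 0$.

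Next I would record the shape of the normalizing constants. By \eqref{diag_block} each $\varepsilon_{i,i}^s$ lies in a single $\mathfrak{A}_{k(i)}$, and since $\varepsilon_{i,j}^s=\varepsilon_{i,i}^s\varepsilon_{i,j}^s\varepsilon_{j,j}^s$ together with $\mathfrak{A}_{k,l}\mathfrak{A}_{k',l'}=0$ for $l\neq k'$, we get $\varepsilon_{i,j}^s\in\mathfrak{A}_{k(i),k(j)}$; thus $n_{(i,j,s)}=\delta_i\delta_j$ with $\delta_i=\sqrt{|X_{k(i)}|}$. Writing $D_s=\operatorname{diag}(\delta_i:i\in F_s)$, the matrix $M_B:=\sum_{i,j}b_{i,j}\,n_{(i,j,s)}\,\varepsilon_{i,j}^s$ is exactly $E(D_sBD_s)$. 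As $D_s$ is real and invertible, $D_sBD_s\succeq 0$ if and only if $B\succeq 0$, so $B\in\mathcal{P}_{F_s}$ forces $M_B\succeq 0$, and similarly $M_C:=\sum_{k,l}c_{k,l}\,n_{(k,l,t)}\,\varepsilon_{k,l}^t\succeq 0$. The Schur product theorem then gives $M_B\circ M_C\succeq 0$.

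Finally I would expand $M_B\circ M_C$ by \eqref{def_krein}: grouping the expansion by $u\in S$, its component in $\mathfrak{C}_u$ is $\sum_{m,n\in F_u}[\tilde{Q}_{s,t}^u(B,C)]_{m,n}\,n_{(m,n,u)}\,\varepsilon_{m,n}^u=E\big(D_u\,\tilde{Q}_{s,t}^u(B,C)\,D_u\big)$. Letting $z_u=\sum_{i\in F_u}\varepsilon_{i,i}^u$ denote the central idempotent of $\mathfrak{C}_u$, which is an orthogonal projection acting as the identity on $\mathfrak{C}_u$ and annihilating $\mathfrak{C}_v$ for $v\neq u$, compression of the positive semidefinite matrix $M_B\circ M_C$ by $z_u$ isolates this component, so $E\big(D_u\,\tilde{Q}_{s,t}^u(B,C)\,D_u\big)\succeq 0$. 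The dictionary then gives $D_u\,\tilde{Q}_{s,t}^u(B,C)\,D_u\succeq 0$, and congruence by the real invertible $D_u^{-1}$, which preserves both hermiticity and positive semidefiniteness, yields $\tilde{Q}_{s,t}^u(B,C)\in\mathcal{P}_{F_u}$. The main obstacle is the dictionary of the second paragraph: it is where the matrix-unit relations \eqref{eq_mat_u}--\eqref{eq_mat_u2} must be used to exhibit the multiplicity structure, the delicate point being that the projections $\varepsilon_{m,m}^u$ all have the same rank $d_u$, so that $E(B')$ genuinely looks like $B'\otimes I_{d_u}$ and the equivalence with positive semidefiniteness of $B'$ is exact in both directions.
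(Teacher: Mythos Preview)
The paper does not prove this theorem; it is quoted as \cite[Lemma~1]{Ho} without argument. Your proof is correct and follows the standard route (essentially Hobart's): realize $B$ and $C$ as positive semidefinite elements $M_B,M_C\in\mathfrak{A}$ via the matrix units, apply the Schur product theorem to obtain $M_B\circ M_C\succeq 0$, expand by \eqref{def_krein}, and compress by the central idempotent $z_u$ of $\mathfrak{C}_u$ to isolate the $\mathfrak{C}_u$-component. Your dictionary identifying $E(B')|_{V_u}$ with $B'\otimes I_{d_u}$ is exactly what is needed to transfer positive semidefiniteness back to the coefficient matrix, and the handling of the normalizations $n_{(i,j,s)}=\delta_i\delta_j$ via diagonal congruences by $D_s,D_t,D_u$ is clean and correct. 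Note that your argument works in the generality of Section~2 (no fiber-commutativity assumed), as it should: the map $i\mapsto k(i)$ need not be injective, but $D_s=\operatorname{diag}(\sqrt{|X_{k(i)}|}:i\in F_s)$ is well-defined regardless.
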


Let $\eta$ be the mapping
from $\Lambda$ to $\{1, \dots, f \}^2$
defined by $\varepsilon_\lambda \in \mathfrak{A}_{\eta(\lambda)}$
for $\lambda \in \Lambda$,
or equivalently,
\begin{equation}
\eta(i,j,s) = (k,l) \text{ if } \varepsilon_{i,j}^s \in \mathfrak{A}_{k,l}.
\label{def_eta}
\end{equation}

\begin{lemma}\label{lem:F'}
For each $s\in S$, define
\[F'_s=\{k \mid 1 \leq k \leq f,  (k,k) \in \{\eta(i,i,s)\mid i \in F_s\} \}.\]
Then $\eta(\Lambda_s) = {F'_s}^2$.
\end{lemma}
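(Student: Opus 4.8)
The plan is to establish the equality $\eta(\Lambda_s)={F'_s}^2$ by proving the two inclusions separately. The only ingredients I expect to need are the matrix-unit relations \eqref{eq_mat_u} and \eqref{eq_mat_u2}, the single-block membership property \eqref{HWThm8}, and the elementary block-multiplication rule already noted before \eqref{diag_block}, namely $\mathfrak{A}_{k,l}\mathfrak{A}_{k',l'}=0$ when $l\neq k'$ and $\mathfrak{A}_{k,l}\mathfrak{A}_{l,l'}\subseteq\mathfrak{A}_{k,l'}$ otherwise. The guiding idea is that, because each $\varepsilon_{i,j}^s$ is a \emph{nonzero} basis element lying in a \emph{single} block by \eqref{HWThm8}, its diagonal partners $\varepsilon_{i,i}^s$ and $\varepsilon_{j,j}^s$ can be recovered as the products $\varepsilon_{i,j}^s\varepsilon_{j,i}^s$ and $\varepsilon_{j,i}^s\varepsilon_{i,j}^s$, and these products pin down the block indices.

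For the inclusion $\eta(\Lambda_s)\subseteq{F'_s}^2$, I would fix $(i,j,s)\in\Lambda_s$ and write $\eta(i,j,s)=(k,l)$, so $\varepsilon_{i,j}^s\in\mathfrak{A}_{k,l}$. By \eqref{eq_mat_u2} one has $\varepsilon_{j,i}^s=(\varepsilon_{i,j}^s)^*\in\mathfrak{A}_{l,k}$, since transpose-conjugation interchanges the two block indices. Then by \eqref{eq_mat_u},
\[
\varepsilon_{i,i}^s=\varepsilon_{i,j}^s\varepsilon_{j,i}^s\in\mathfrak{A}_{k,l}\mathfrak{A}_{l,k}\subseteq\mathfrak{A}_{k,k},
\]
and this element is nonzero because it is a basis element; hence $\eta(i,i,s)=(k,k)$ and $k\in F'_s$. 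The symmetric computation $\varepsilon_{j,j}^s=\varepsilon_{j,i}^s\varepsilon_{i,j}^s\in\mathfrak{A}_{l,l}$ gives $l\in F'_s$, so $(k,l)\in{F'_s}^2$.

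For the reverse inclusion ${F'_s}^2\subseteq\eta(\Lambda_s)$, I would take arbitrary $k,l\in F'_s$. By the definition of $F'_s$ there exist $i,j\in F_s$ with $\varepsilon_{i,i}^s\in\mathfrak{A}_{k,k}$ and $\varepsilon_{j,j}^s\in\mathfrak{A}_{l,l}$. Consider $\varepsilon_{i,j}^s$, which lies in some single block $\mathfrak{A}_{k',l'}$ by \eqref{HWThm8}. Repeating the previous computation yields $\varepsilon_{i,i}^s=\varepsilon_{i,j}^s\varepsilon_{j,i}^s\in\mathfrak{A}_{k',k'}$ and $\varepsilon_{j,j}^s\in\mathfrak{A}_{l',l'}$. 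Since $\varepsilon_{i,i}^s$ is nonzero and distinct diagonal blocks $\mathfrak{A}_{m,m}$ intersect only in $0$, comparison with $\varepsilon_{i,i}^s\in\mathfrak{A}_{k,k}$ forces $k'=k$, and similarly $l'=l$. Therefore $\eta(i,j,s)=(k,l)$, giving $(k,l)\in\eta(\Lambda_s)$.

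I do not anticipate a genuine obstacle here; the argument is essentially a bookkeeping of block indices. The one point that must be handled with care is the appeal to \eqref{HWThm8}, which guarantees that each $\varepsilon_{i,j}^s$ occupies a \emph{single} block rather than a genuine sum of blocks, together with the nonvanishing of the basis elements, which is what lets me conclude that the block indices are determined \emph{uniquely} rather than merely being consistent inclusions. No deeper structural input about $\mathfrak{A}$ is required.
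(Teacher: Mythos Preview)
Your proposal is correct and follows essentially the same approach as the paper: both prove the two inclusions separately by exploiting the matrix-unit relations \eqref{eq_mat_u}, \eqref{eq_mat_u2} together with the single-block property \eqref{HWThm8}/\eqref{diag_block} and the block-multiplication rule. The only cosmetic difference is that for the forward inclusion the paper invokes \eqref{diag_block} to know $\varepsilon_{i,i}^s$ lies in some diagonal block and then pins down which one via \eqref{eq_mat_u}, whereas you compute $\varepsilon_{i,i}^s=\varepsilon_{i,j}^s\varepsilon_{j,i}^s$ directly; and for the reverse inclusion the paper uses $\varepsilon_{i,j}^s=\varepsilon_{i,i}^s\varepsilon_{i,j}^s\varepsilon_{j,j}^s\in\mathfrak{A}_{k,k}\mathfrak{A}\mathfrak{A}_{l,l}\subseteq\mathfrak{A}_{k,l}$ in one step, while you pass through \eqref{HWThm8} and compare block indices---but the substance is identical.
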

\begin{proof}
First, we prove $\eta(\Lambda_s) \subset {F'_s}^2$.
For $(i,j,s) \in \Lambda_s$, suppose $\eta(i,j,s)=(k,l)$.
Namely, $\varepsilon_{i,j}^s \in \mathfrak{A}_{k,l}$.
By \eqref{eq_mat_u} and \eqref{diag_block},
$\varepsilon_{i,i}^s \in \mathfrak{A}_k$
and $\varepsilon_{j,j}^s \in \mathfrak{A}_l$ hold.
Thus $\eta(i,i,s) =(k,k)$ and $\eta(j,j,s)=(l,l)$ and
these mean $k,l \in F'_s$.

Conversely, suppose $\eta(i,i,s)=(k,k)$ and $\eta(j,j,s)=(l,l)$,
where $i,j \in F_s$.
Then $\varepsilon_{i,i}^s \in \mathfrak{A}_k$ and
$\varepsilon_{j,j}^s \in \mathfrak{A}_l$.
By \eqref{eq_mat_u}, we obtain $\varepsilon_{i,j}^s \in \mathfrak{A}_{k,l}$.
Thus $(k,l)=\eta(i,j,s) \in \eta(\Lambda_s)$.
\end{proof}

\begin{lemma}\label{zero}
Let $\lambda, \mu, \nu \in \Lambda$.
If $q_{\lambda, \mu}^\nu \neq 0$,
then $\eta (\lambda) = \eta(\mu) =\eta(\nu)$.
\end{lemma}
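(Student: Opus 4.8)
The plan is to exploit the block-support behaviour of the Hadamard product together with the uniqueness of the decomposition of $\mathfrak{A}$ into the blocks $\mathfrak{A}_{k,l}$. The whole argument is a bookkeeping exercise about the supports of the matrices involved, so I do not expect a genuine obstacle; the only care needed is to ensure that the Krein parameters can legitimately be read off block by block.

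First I would record the elementary fact that if a matrix $M$ is supported on $X_k \times X_l$ and a matrix $N$ is supported on $X_{k'} \times X_{l'}$, then $M \circ N$ is supported on the intersection $(X_k \times X_l) \cap (X_{k'} \times X_{l'})$, which is empty unless $(k,l)=(k',l')$. Applying this to $\varepsilon_\lambda$ and $\varepsilon_\mu$, which by the definition \eqref{def_eta} of $\eta$ lie in $\mathfrak{A}_{\eta(\lambda)}$ and $\mathfrak{A}_{\eta(\mu)}$ respectively, gives $\varepsilon_\lambda \circ \varepsilon_\mu = 0$ whenever $\eta(\lambda) \neq \eta(\mu)$. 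In that case the left-hand side of \eqref{def_krein} vanishes, and since $\{\varepsilon_\nu \mid \nu \in \Lambda\}$ is a basis, every $q_{\lambda,\mu}^\nu$ is zero. Hence a nonzero Krein parameter forces $\eta(\lambda) = \eta(\mu)$, say equal to $(k,l)$.

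Next, when $\eta(\lambda)=\eta(\mu)=(k,l)$, the same support computation shows that $\varepsilon_\lambda \circ \varepsilon_\mu$ is again supported on $X_k \times X_l$; since $\mathfrak{A}$ is closed under $\circ$, this yields $\varepsilon_\lambda \circ \varepsilon_\mu \in \mathfrak{A} \cap \{\text{matrices supported on } X_k \times X_l\} = \mathfrak{A}_{k,l}$. Finally I would invoke that $\mathfrak{A}$ is the vector-space direct sum $\bigoplus_{k,l}\mathfrak{A}_{k,l}$, the blocks being supported on the pairwise disjoint sets $X_k \times X_l$, so the block decomposition of any element of $\mathfrak{A}$ is unique. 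Because each basis element $\varepsilon_\nu$ lies entirely in the single block $\mathfrak{A}_{\eta(\nu)}$ by \eqref{HWThm8}, comparing the $(k,l)$-block of both sides of \eqref{def_krein} shows that only those $\nu$ with $\eta(\nu)=(k,l)$ can contribute on the right-hand side. Thus $q_{\lambda,\mu}^\nu \neq 0$ implies $\eta(\nu)=(k,l)=\eta(\lambda)=\eta(\mu)$, as required.

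The step most deserving of attention is the last one: the conclusion that the $q_{\lambda,\mu}^\nu$ are read off block by block relies on the chosen basis of matrix units respecting the block structure, which is precisely the content of \eqref{HWThm8}, so that no single $\varepsilon_\nu$ straddles two different blocks $\mathfrak{A}_{k,l}$ and $\mathfrak{A}_{k',l'}$.
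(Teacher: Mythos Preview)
Your proof is correct and follows essentially the same route as the paper: first use the disjoint block supports to show $\varepsilon_\lambda\circ\varepsilon_\mu=0$ when $\eta(\lambda)\neq\eta(\mu)$, then use $\varepsilon_\lambda\circ\varepsilon_\mu\in\mathfrak{A}_{\eta(\lambda)}$ together with the block decomposition of $\mathfrak{A}$ to kill the remaining coefficients. The only difference is that you spell out more explicitly the role of \eqref{HWThm8} and of the direct sum $\bigoplus_{k,l}\mathfrak{A}_{k,l}$, which the paper leaves implicit.
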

\begin{proof}
By the definition of $\eta$,
$\varepsilon_\lambda \in \mathfrak{A}_{\eta(\lambda)}$, 
$\varepsilon_\mu \in \mathfrak{A}_{\eta(\mu)}$, and
$\varepsilon_\nu \in \mathfrak{A}_{\eta(\nu)}$
hold.
If $\eta(\lambda) \neq \eta(\mu)$,
then $\varepsilon_\lambda \circ \varepsilon_\mu =0$,
and this means $q_{\lambda, \mu}^\nu =0$ for any $\nu \in \Lambda$.
If $\eta(\lambda)= \eta(\mu) \neq \eta(\nu)$,
then
$\varepsilon_\lambda \circ \varepsilon_\mu \in \mathfrak{A}_{\eta(\lambda)}$ and
this means that $q_{\lambda,\mu}^\nu=0$.
\end{proof}

By Lemma~\ref{zero}, the expansion \eqref{def_krein} is simplified to
\begin{equation}
\varepsilon_{\lambda} \circ \varepsilon_{\mu}= 
\frac{\delta_{\eta(\lambda), \eta(\mu)}}{n_{\lambda}}
\sum_{\substack{\nu \in \Lambda \\ \eta(\nu)= \eta(\lambda) }}
q_{\lambda, \mu}^\nu \varepsilon_\nu. \label{krein_imp}
\end{equation}

For brevity, we write a basis of matrix units
$\{\varepsilon_{i,j}^s \mid i,j \in F_s \}$ as $\{\varepsilon_{i,j}^s \}$ and
we define $Z \circ \{\varepsilon_{i,j}^s \} 
= \{\zeta_{i,j} \varepsilon_{i,j}^s \mid i,j \in F_s  \}$
for a matrix $Z= (\zeta_{i,j}) \in \mathrm{M}_{F_s}(\mathbb{C})$.

\begin{lemma}\label{uniqueness}
Fix $s \in S$.
Let $Z= (\zeta_{i,j}) \in \mathrm{M}_{F_s}(\mathbb{C})$.
If $Z\circ \{\varepsilon_{i,j}^s \}$ is 
a basis of matrix units for $\mathfrak{C}_s$, then 
$Z$ is a positive semidefinite matrix with rank one and
$|\zeta_{i,j}|=1$ for all $i,j \in F_s$.
\end{lemma}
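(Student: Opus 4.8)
The plan is to rewrite the hypothesis that $Z\circ\{\varepsilon_{i,j}^s\}=\{\zeta_{i,j}\varepsilon_{i,j}^s\mid i,j\in F_s\}$ is again a basis of matrix units as a system of scalar equations in the $\zeta_{i,j}$, and then read off the three assertions. I first observe that a basis of $\mathfrak{C}_s\cong\mathrm{M}_{e_s}(\mathbb{C})$ has exactly $|F_s|^2$ elements, so the scaled family, which is linearly independent wherever its members are nonzero, can be a basis only if every $\zeta_{i,j}\neq 0$. I also note that, since scaling an element by a scalar does not change the block $\mathfrak{A}_{k,l}$ containing it, condition \eqref{HWThm8} imposes no constraint on $Z$; the content therefore comes entirely from \eqref{eq_mat_u} and \eqref{eq_mat_u2}.

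Next I would impose the product rule \eqref{eq_mat_u} on the scaled family. Computing $(\zeta_{i,j}\varepsilon_{i,j}^s)(\zeta_{j,l}\varepsilon_{j,l}^s)$ via \eqref{eq_mat_u} for the original basis and comparing with the required value $\zeta_{i,l}\varepsilon_{i,l}^s$ yields the cocycle identity
\[
\zeta_{i,j}\zeta_{j,l}=\zeta_{i,l}\qquad(i,j,l\in F_s).
\]
Setting $i=j=l$ and using $\zeta_{i,i}\neq0$ gives $\zeta_{i,i}=1$, and setting $l=i$ gives $\zeta_{i,j}\zeta_{j,i}=1$. Fixing an arbitrary reference index $i_0\in F_s$ and putting $j=i_0$ gives $\zeta_{i,l}=\zeta_{i,i_0}\zeta_{i_0,l}$, which displays $Z$ as an outer product and hence proves $\operatorname{rank}Z=1$.

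Then I would bring in the adjoint rule \eqref{eq_mat_u2}. Applying it to the scaled family and using ${\varepsilon_{i,j}^s}^*=\varepsilon_{j,i}^s$ for the original basis gives $\overline{\zeta_{i,j}}\,\varepsilon_{j,i}^s=\zeta_{j,i}\varepsilon_{j,i}^s$, whence $\overline{\zeta_{i,j}}=\zeta_{j,i}$; that is, $Z$ is hermitian. Combined with $\zeta_{i,j}\zeta_{j,i}=1$ this yields $|\zeta_{i,j}|^2=\zeta_{i,j}\overline{\zeta_{i,j}}=\zeta_{i,j}\zeta_{j,i}=1$, so $|\zeta_{i,j}|=1$. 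Finally, writing $u_i=\zeta_{i,i_0}$, the hermitian relation gives $\zeta_{i_0,l}=\overline{\zeta_{l,i_0}}=\overline{u_l}$, so that $\zeta_{i,l}=u_i\overline{u_l}$ and $Z=uu^*$; this is positive semidefinite of rank one, which finishes the proof.

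The one point that must not be overlooked, and which I expect to be the real crux, is that the adjoint condition \eqref{eq_mat_u2} is genuinely needed: the multiplicative relation \eqref{eq_mat_u} alone forces $\zeta_{i,l}=t_i/t_l$ for some nonzero scalars $t_i$ (hence rank one), but it permits entries of arbitrary nonzero modulus and does not force hermiticity. Thus both the modulus-one and the positive-semidefiniteness conclusions rest on \eqref{eq_mat_u2}, while everything else is a routine comparison of expansion coefficients.
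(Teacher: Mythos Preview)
Your proof is correct and follows essentially the same route as the paper's: both extract the scalar identities $\zeta_{i,j}\zeta_{j,k}=\zeta_{i,k}$ and $\overline{\zeta_{j,i}}=\zeta_{i,j}$ from \eqref{eq_mat_u} and \eqref{eq_mat_u2}, deduce $|\zeta_{i,j}|=1$, and then write $Z$ as an outer product to obtain positive semidefiniteness and rank one. Your version is somewhat more explicit---you separately argue that all $\zeta_{i,j}$ are nonzero and you isolate which conclusions follow from \eqref{eq_mat_u} alone versus which genuinely require \eqref{eq_mat_u2}---but the underlying argument is the same.
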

\begin{proof}
Since $ Z \circ \{\varepsilon_{i,j}^s \}$ is 
a basis of matrix units for $\mathfrak{C}_s$,
$Z \circ \{\varepsilon_{i,j}^s \}$ satisfies
\eqref{eq_mat_u}. This means that 
$\zeta_{i,j}^s \zeta_{j,k}^s= \zeta_{i,k}^s$ and
$\overline{\zeta_{j,i}^s} =\zeta_{i,j}^s$
for any $i,j,k \in F_s$. 
Thus $|\zeta_{i,j}|=1$ holds.
Moreover, 
Since $\zeta_{i,j}^s= \overline{\zeta_{1,i}^s} \zeta_{1,j}^s $ holds,
$Z$ is expressed as 
$
Z= \mathbf{z}^* \mathbf{z},
$
where $\mathbf{z}=( \zeta_{1,j})_{j \in F_s}. $
Thus $Z$ is a positive semidefinite matrix with rank one.
\end{proof}


\section{Fiber-commutative coherent configurations}

In this section, we also use the same notation as the previous section.
In other words, $\mathfrak{A}$ is the adjacency algebra of
a coherent configuration $\mathcal{C}$,
$\mathfrak{A}$ is decomposed into the direct sum of simple ideals as
$\mathfrak{A} = \bigoplus_{s \in S} \mathfrak{C}_s$.
Moreover $\{ \varepsilon_{i,j}^s\}$ is a basis of matrix units for $\mathfrak{C}_s$,
and their union over $s \in S$ is bases of matrix units for $\mathfrak{A}$.
In this section, we assume
that the coherent configuration $\mathcal{C}$ is fiber-commu\-tative.

\begin{lemma}\label{dim=1}
For any $s \in S$ and $k,l \in \{1 ,\dots , f\}$,
$\mathrm{dim} (\mathfrak{C}_s \cap \mathfrak{A}_{k,l}) \leq 1$.
In other words, the number of pairs $(i,j)$ satisfying
$\varepsilon_{i,j}^s \in \mathfrak{A}_{k,l}$ is at most $1$.
\end{lemma}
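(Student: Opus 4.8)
The plan is to reduce the claim to a statement about diagonal matrix units and then to exploit fiber-commutativity. Since each $\varepsilon_{i,j}^s$ lies in a single block $\mathfrak{A}_{\eta(i,j,s)}$, the space $\mathfrak{C}_s \cap \mathfrak{A}_{k,l}$ is spanned by those basis elements $\varepsilon_{i,j}^s$ with $\varepsilon_{i,j}^s \in \mathfrak{A}_{k,l}$, so its dimension equals the number of such pairs $(i,j)$. First I would record, exactly as in the proof of Lemma~\ref{lem:F'}, that $\varepsilon_{i,j}^s \in \mathfrak{A}_{k,l}$ holds if and only if $\varepsilon_{i,i}^s \in \mathfrak{A}_k$ and $\varepsilon_{j,j}^s \in \mathfrak{A}_l$; indeed, writing $\varepsilon_{i,i}^s=\varepsilon_{i,j}^s\varepsilon_{j,i}^s$ and $\varepsilon_{j,j}^s=\varepsilon_{j,i}^s\varepsilon_{i,j}^s$ via \eqref{eq_mat_u} and \eqref{eq_mat_u2}, and invoking \eqref{HWThm8}, pins down the block indices. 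Consequently the number of pairs with $\varepsilon_{i,j}^s\in\mathfrak{A}_{k,l}$ factors as the product of the number of $i\in F_s$ with $\varepsilon_{i,i}^s\in\mathfrak{A}_k$ and the number of $j\in F_s$ with $\varepsilon_{j,j}^s\in\mathfrak{A}_l$. It therefore suffices to treat the diagonal case: for each $s$ and each $k$, at most one index $i\in F_s$ satisfies $\varepsilon_{i,i}^s\in\mathfrak{A}_k$.

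For the diagonal case I would argue by contradiction. Suppose $i\neq j$ with both $\varepsilon_{i,i}^s$ and $\varepsilon_{j,j}^s$ lying in the same diagonal block $\mathfrak{A}_k$. The block-index computation above then forces $\varepsilon_{i,j}^s\in\mathfrak{A}_{k,k}=\mathfrak{A}_k$ and likewise $\varepsilon_{j,i}^s\in\mathfrak{A}_k$. Here fiber-commutativity enters: since $\mathfrak{A}_k$ is commutative, $\varepsilon_{i,j}^s$ and $\varepsilon_{j,i}^s$ commute, so \eqref{eq_mat_u} yields $\varepsilon_{i,i}^s=\varepsilon_{i,j}^s\varepsilon_{j,i}^s=\varepsilon_{j,i}^s\varepsilon_{i,j}^s=\varepsilon_{j,j}^s$. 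This contradicts the linear independence of the matrix units $\{\varepsilon_{i,j}^s\}$ when $i\neq j$, completing the reduction and hence the proof.

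I expect the only real content to be the observation in the second paragraph that the commutator of the two off-diagonal units equals the difference $\varepsilon_{i,i}^s-\varepsilon_{j,j}^s$ of distinct diagonal idempotents; everything else is bookkeeping with the block structure \eqref{HWThm8}--\eqref{diag_block}. The point to get right is that fiber-commutativity is used precisely here and is indispensable: without it two distinct diagonal idempotents may share a block, which is exactly the source of the non-uniqueness of bases of matrix units noted after Definition~\ref{def_mat_un}. A minor subtlety to handle carefully is ensuring that the off-diagonal units genuinely fall in the single block $\mathfrak{A}_k$ rather than merely in $\bigcup_{a,b}\mathfrak{A}_{a,b}$; this is guaranteed by \eqref{HWThm8} together with the identities $\varepsilon_{i,i}^s=\varepsilon_{i,j}^s\varepsilon_{j,i}^s$ and $\varepsilon_{j,j}^s=\varepsilon_{j,i}^s\varepsilon_{i,j}^s$, which force both block indices of $\varepsilon_{i,j}^s$ to equal $k$.
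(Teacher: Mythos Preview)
Your argument is correct and follows essentially the same route as the paper: both proofs place two diagonal idempotents $\varepsilon_{i,i}^s,\varepsilon_{i',i'}^s$ in the same block $\mathfrak{A}_k$, deduce that the off-diagonal unit $\varepsilon_{i,i'}^s$ also lies in $\mathfrak{A}_k$, and then use commutativity of $\mathfrak{A}_k$ to derive a contradiction. The only cosmetic difference is the commutator you compute---the paper commutes $\varepsilon_{i,i'}^s$ with $\varepsilon_{i,i}^s$ to get $\varepsilon_{i,i'}^s=0$, whereas you commute $\varepsilon_{i,j}^s$ with $\varepsilon_{j,i}^s$ to get $\varepsilon_{i,i}^s=\varepsilon_{j,j}^s$---and your explicit factorization into the diagonal case, which the paper leaves implicit.
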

\begin{proof}
By \eqref{HWThm8}, for each $i,j \in F_s$,
there exist $k,l$ such that $\varepsilon_{i,j}^s \in \mathfrak{A}_{k,l}$.
Thus it suffices to show 
$\# \{(i,j,s) \in \Lambda_s \mid \varepsilon_{i,j}^s \in \mathfrak{A}_{k,l} \}
\leq 1$.
Suppose $\varepsilon_{i,j}^s , \varepsilon_{i'j'}^s \in \mathfrak{A}_{k,l}$
and $i \neq i'$.
By \eqref{diag_block}, we have 
$\varepsilon_{i,i}^s , \varepsilon_{i',i'}^s \in \mathfrak{A}_k$.
Thus $\varepsilon_{i,i'}^s =
\varepsilon_{i,i} \varepsilon_{i,i'} \varepsilon_{i',i'} \in \mathfrak{A}_k$
holds.
Since $\mathfrak{A}_k$ is commutative,
$\varepsilon_{i,i'}^s = \varepsilon_{i,i}^s \varepsilon_{i,i'}^s
=\varepsilon_{i,i'}^s \varepsilon_{i,i}^s = 0$, 
and this is a contradiction.
Therefore, we obtain $i = i'$ and similarly, $j =j'$.
\end{proof}

Note that Lemma~\ref{dim=1} is stated implicitly by Hobart and Williford
in the proof of \cite[Corollary~10]{HW}. 
Since $\eta|_{\Lambda_s}:F_s^2 \times \{s\} \rightarrow \{1, \dots ,f \}^2$ is
injective by Lemma~\ref{dim=1},
the set $F_s$ can be taken to be the subset $F'_s$ of $\{1, \dots ,f \}$
defined in Lemma~\ref{lem:F'}. 

\begin{definition}
For $s \in S$, we define the
{\it support} for $\mathfrak{C}_s$ to be the subset
\[
F_s =\{i \in \{ 1, \dots , f\} \mid
\mathrm{dim}(\mathfrak{C}_s \cap \mathfrak{A}_{i,i}) =1 \}.
\]
\end{definition}

By the definition of $F_s$, we can take $\eta$ as
$\eta(i,j,a)= (i,j)$ for $i,j \in F_s$.
Indeed, by \eqref{diag_block}, we may suppose
$\varepsilon_{i,i}^s \in \mathfrak{A}_{i,i}$ for all $i\in F_s$.
Then by $\varepsilon_{i,j}^s =\varepsilon_{i,i}^s
\varepsilon_{i,j}^s \varepsilon_{j,j}^s \in \mathfrak{A}_{i,j},$
we have $\eta(i,j,s)=(i,j)$.

For brevity, we write $F_{s,t,u} =F_s \cap F_t \cap F_u$. 
Note that $F_1 =\{ 1, \dots , f\}$ holds by \eqref{matunit}.
By Lemma~\ref{zero}, \eqref{krein_imp} can be written as follows:
for $(i,j) \in F_s^2 \cap F_t^2$,
\begin{equation}
\varepsilon_{i,j}^s \circ \varepsilon _{i,j}^t
= \frac{1}{\sqrt{|X_i||X_j|}}\sum_{ \substack{u \in S \\ F_u \ni i,j} }
q_{{(i,j,s)}, (i,j,t)}^{(i,j,u)}\varepsilon_{i,j}^u. \label{krein2}
\end{equation}

\begin{definition}\label{def_kre_mat}
For $s,t,u \in S$,
let $Q_{s,t}^u \in \mathrm{M}_{F_{s,t,u}} (\mathbb{C})$
be the matrix
with $(i,j)$-entry 
\[
(Q_{s,t}^u)_{i,j} = q_{(i,j,s), (i,j,t)}^{(i,j,u)}.
\] 
The matrix $Q_{s,t}^u$ is called {\it the matrix of Krein parameters}
with respect to the bases of matrix units
$\{\varepsilon_{i,j}^s \},  \{\varepsilon_{i,j}^t \},\{\varepsilon_{i,j}^s \}$
for $\mathfrak{C}_s, \mathfrak{C}_t, \mathfrak{C}_u$.
\end{definition}

Note that, by \eqref{krein2}, $Q_{s,t}^u = Q_{t,s}^u$ holds
for any $s,t,u \in S$.
Moreover, the matrix $Q_{s,t}^u$ is hermitian
by \eqref{eq_mat_u2} and \eqref{krein2}.

\begin{proposition}\label{prop:1st}
For any $s,t \in S$, we have
$Q_{1,s}^t = \delta_{s,t} J_{F_{1,s,t}}$.
\end{proposition}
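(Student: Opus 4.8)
The plan is to evaluate the Hadamard product $\varepsilon_{i,j}^{1}\circ\varepsilon_{i,j}^{s}$ explicitly and read off the Krein parameters by comparing with the expansion \eqref{krein2}. First I would fix $s,t\in S$ and an index $(i,j)$ with $i,j\in F_{1,s,t}$, so that all three of $\varepsilon_{i,j}^{1},\varepsilon_{i,j}^{s},\varepsilon_{i,j}^{t}$ are defined; recall from Definition~\ref{def_kre_mat} that the $(i,j)$-entry of $Q_{1,s}^{t}$ is $q_{(i,j,1),(i,j,s)}^{(i,j,t)}$, so it suffices to show this equals $\delta_{s,t}$.

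The key observation is that the matrix units of $\mathfrak{C}_1$ are the normalized block-constant matrices \eqref{matunit}, so that $\varepsilon_{i,j}^{1}=\frac{1}{\sqrt{|X_i||X_j|}}J_{i,j}$. Since $i,j\in F_{1,s,t}\subseteq F_s$, the element $\varepsilon_{i,j}^{s}$ lies in $\mathfrak{A}_{i,j}$ and is therefore supported on the block $X_i\times X_j$, whence Hadamard multiplication by $J_{i,j}$ acts as the identity on it. I would thus obtain
\[
\varepsilon_{i,j}^{1}\circ\varepsilon_{i,j}^{s}
=\frac{1}{\sqrt{|X_i||X_j|}}\,J_{i,j}\circ\varepsilon_{i,j}^{s}
=\frac{1}{\sqrt{|X_i||X_j|}}\,\varepsilon_{i,j}^{s}.
\]

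To finish, I would compare this with the instance of \eqref{krein2} in which the superscripts are $1$ and $s$, namely
\[
\varepsilon_{i,j}^{1}\circ\varepsilon_{i,j}^{s}
=\frac{1}{\sqrt{|X_i||X_j|}}\sum_{\substack{u\in S\\ F_u\ni i,j}}
q_{(i,j,1),(i,j,s)}^{(i,j,u)}\,\varepsilon_{i,j}^{u}.
\]
The main (and essentially only) technical point is linear independence: for distinct $u$ with $i,j\in F_u$ the elements $\varepsilon_{i,j}^{u}$ lie in different simple ideals $\mathfrak{C}_u$, and each spans the one-dimensional space $\mathfrak{C}_u\cap\mathfrak{A}_{i,j}$ by Lemma~\ref{dim=1}, so they are linearly independent. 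Equating coefficients in the two displays then forces $q_{(i,j,1),(i,j,s)}^{(i,j,u)}=\delta_{s,u}$ for every such $u$, and in particular $q_{(i,j,1),(i,j,s)}^{(i,j,t)}=\delta_{s,t}$. As this holds for all $i,j\in F_{1,s,t}$, it yields $Q_{1,s}^{t}=\delta_{s,t}J_{F_{1,s,t}}$.
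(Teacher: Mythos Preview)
Your argument is correct and is precisely the computation the paper has in mind when it says the result is ``immediate from \eqref{matunit}, \eqref{krein2} and Definition~\ref{def_kre_mat}.'' One small simplification: you do not need to invoke Lemma~\ref{dim=1} for the linear independence of the $\varepsilon_{i,j}^{u}$, since the union of the bases of matrix units is by construction a basis of $\mathfrak{A}=\bigoplus_{u}\mathfrak{C}_u$.
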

\begin{proof}
Immediate from \eqref{matunit}, \eqref{krein2} and Definition~\ref{def_kre_mat}.
\end{proof}

\begin{proposition}\label{prop:st1}
For any $s,t \in S$,
\[
Q_{s,t}^1
=\delta_{\hat{s},t} \mathrm{tr}(\varepsilon_{j,j}^t) J_{F_{s,t,1}}.
\]
In particular, $\mathrm{tr}(\varepsilon_{j,j}^t)$ is independent of $j\in F_t$.
\end{proposition}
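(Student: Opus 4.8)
The plan is to isolate the coefficient of $\varepsilon_{i,j}^1$ in the Hadamard expansion \eqref{krein2} by applying the linear functional $\sigma$ that sums all entries of a matrix, the point being that $\sigma$ annihilates every matrix unit except those lying in $\mathfrak{C}_1$. The starting observation is the trace orthogonality of matrix units: using \eqref{eq_mat_u} and \eqref{eq_mat_u2}, for any $s,u\in S$ one has $\tr(\varepsilon_{i,j}^s\varepsilon_{k,l}^u)=\delta_{s,u}\delta_{j,k}\tr(\varepsilon_{i,l}^s)$, and $\tr(\varepsilon_{i,l}^s)=0$ unless $i=l$, since $\varepsilon_{i,l}^s\in\mathfrak{A}_{i,l}$ has vanishing diagonal when $i\neq l$. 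Hence $\tr(\varepsilon_{i,j}^s\varepsilon_{k,l}^u)=\delta_{s,u}\delta_{j,k}\delta_{i,l}\tr(\varepsilon_{i,i}^s)$, which is the only analytic input needed.

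First I would record that $\tr(\varepsilon_{i,i}^s)$ is independent of $i\in F_s$: applying $\tr(AB)=\tr(BA)$ with $A=\varepsilon_{i,j}^s$ and $B=\varepsilon_{j,i}^s$ gives $\tr(\varepsilon_{i,i}^s)=\tr(\varepsilon_{j,j}^s)$ by \eqref{eq_mat_u}. This already settles the final assertion of the proposition. I would also note that $\varepsilon_{i,i}^s$ is a Hermitian idempotent by \eqref{eq_mat_u} and \eqref{eq_mat_u2}, so $\tr(\varepsilon_{i,i}^s)$ is a non-negative integer, in particular real.

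Next I would apply $\sigma$ to both sides of \eqref{krein2}, where $\sigma(M)=\sum_{x,y}M_{x,y}$; the key identity is $\sigma(A\circ B)=\sum_{x,y}A_{x,y}B_{x,y}=\tr(AB^T)$. For the left side, $\sigma(\varepsilon_{i,j}^s\circ\varepsilon_{i,j}^t)=\tr(\varepsilon_{i,j}^s(\varepsilon_{i,j}^t)^T)$, and since $(\varepsilon_{i,j}^t)^T=\overline{(\varepsilon_{i,j}^t)^*}=\overline{\varepsilon_{j,i}^t}=\varepsilon_{j,i}^{\hat t}$ by \eqref{eq_mat_u2} and the conjugation convention of Definition~\ref{def_mat_un}, the trace orthogonality yields $\sigma(\varepsilon_{i,j}^s\circ\varepsilon_{i,j}^t)=\delta_{s,\hat t}\tr(\varepsilon_{i,i}^s)=\delta_{\hat s,t}\tr(\varepsilon_{i,i}^s)$. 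For the right side, the same orthogonality together with \eqref{matunit} gives $\sigma(\varepsilon_{i,j}^u)=\delta_{u,1}\sqrt{|X_i||X_j|}$ (using $\tr(\varepsilon_{i,i}^1)=1$), so only the $u=1$ term survives and $\sigma(\mathrm{RHS})=q_{(i,j,s),(i,j,t)}^{(i,j,1)}=(Q_{s,t}^1)_{i,j}$.

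Comparing the two sides gives $(Q_{s,t}^1)_{i,j}=\delta_{\hat s,t}\tr(\varepsilon_{i,i}^s)$ for all $i,j\in F_{s,t,1}$. The last step is to rewrite the right-hand side in the stated form: the entry is independent of $i$ and $j$ by the index-independence above, and when $\hat s=t$ the relation $\varepsilon_{j,j}^t=\overline{\varepsilon_{j,j}^s}$ together with the reality of the trace gives $\tr(\varepsilon_{i,i}^s)=\tr(\varepsilon_{j,j}^t)$, whence $Q_{s,t}^1=\delta_{\hat s,t}\tr(\varepsilon_{j,j}^t)J_{F_{s,t,1}}$. I expect the only genuine subtlety to be this bookkeeping reconciliation, namely tracking which fiber index the trace is attached to and justifying the passage from $\varepsilon_{i,i}^s$ to $\varepsilon_{j,j}^t$ through both index-independence and complex conjugation; all the substantive content is the elementary trace orthogonality of matrix units.
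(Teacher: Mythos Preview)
Your proof is correct and follows essentially the same approach as the paper: both extract the $u=1$ coefficient from \eqref{krein2} via the identity $\sum_{x,y}(A\circ B)_{x,y}=\tr(AB^T)$ together with the trace orthogonality of matrix units across distinct simple ideals. The only cosmetic difference is that the paper multiplies by $\sum_{k,l}\varepsilon_{k,l}^1$ and then takes the trace (transposing the $s$-factor to land directly on $\tr(\varepsilon_{j,j}^t)$), whereas you apply the sum-of-entries functional $\sigma$ directly (transposing the $t$-factor, hence landing on $\tr(\varepsilon_{i,i}^s)$ and needing the extra reconciliation step via reality of the trace); these are the same computation up to harmless scalar weights on the fiber blocks.
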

\begin{proof}
By \eqref{krein2},
\begin{align*}
\left(\varepsilon_{i,j}^s \circ \varepsilon _{i,j}^t
\right) \sum_{k,l \in F_1} \varepsilon_{k,l}^1
&= \frac{1}{\sqrt{|X_i||X_j|}}
\sum_{k,l \in F_1} \sum_{ \substack{u \in S \\ F_u \ni i,j} }
(Q_{s,t}^u)_{i,j} \varepsilon_{i,j}^u \varepsilon_{k,l}^1 \\
&=\frac{1}{\sqrt{|X_i||X_j|}} (Q_{s,t}^1)_{i,j} \sum_{l\in F_1}
\varepsilon_{i,l}^{1}.
\end{align*}
We compute the trace of each side of this identity.
By \eqref{matunit},
the trace of the right-hand side is $(Q_{s,t}^1)_{i,j}/{\sqrt{|X_i||X_j|}}$.
On the other hand, 
the trace of the left-hand side is
\begin{align*}
\mathrm{tr} \left((
\varepsilon_{i,j}^s \circ \varepsilon _{i,j}^t
) \sum_{k,l \in F_1}\varepsilon_{k,l}^1 \right)
&= \sum_{x,y \in X}
\left(\sum_{k,l \in F_1}\varepsilon_{i,j}^s \circ \varepsilon _{i,j}^t
\circ \varepsilon_{k,l}^1 \right)_{x,y} \\
&=\frac{1}{\sqrt{|X_i||X_j|}} \sum_{x,y \in X} (\varepsilon_{i,j}^s
\circ \varepsilon _{i,j}^t)_{x,y} \quad (\text{by \eqref{matunit}}) \\
&=\frac{1}{\sqrt{|X_i||X_j|}}
\mathrm{tr} ({\varepsilon_{i,j}^s}^T
\varepsilon _{i,j}^t) \\
&=\frac{1}{\sqrt{|X_i||X_j|}}
\mathrm{tr} (\overline{\varepsilon_{j,i}^s}
\varepsilon _{i,j}^t)  \quad 
(\text{by } {\varepsilon_{i,j}^s}^* = \varepsilon_{j,i}^s) \\
&=\frac{1}{\sqrt{|X_i||X_j|}}
\mathrm{tr} (\varepsilon_{j,i}^{\hat{s}}
\varepsilon _{i,j}^t) \\
&=\frac{1}{\sqrt{|X_i||X_j|}} \delta_{\hat{s},t} 
\mathrm{tr} (\varepsilon _{j,j}^t). \\
\end{align*}
By the properties of the trace,
$\mathrm{tr}({\varepsilon_{i,j}^s}^T\varepsilon_{i,j}^t)
= \mathrm{tr}(\varepsilon_{i,j}^t {\varepsilon_{i,j}^s}^T) $ and
this implies
$\mathrm{tr}(\varepsilon_{j,j}^t) = \mathrm{tr}(\varepsilon_{i,i}^t)$.
Thus we obtain $(Q_{s,t}^1)_{i,j}= \delta_{\hat{s},t} \mathrm{tr}(\varepsilon_{i,i}^t)
= \delta_{\hat{s},t} \mathrm{tr}(\varepsilon_{j,j}^t)$,
and the result follows.
\end{proof}

\begin{proposition}
For $s,t,u \in S, $ let $\mathbf{z}_s \in \mathbb{C}^{F_s},
\mathbf{z}_t \in \mathbb{C}^{F_t}, \mathbf{z}_u \in \mathbb{C}^{F_u}$ be
vectors whose entries consist of complex numbers with absolute value $1$.
Define $\mathbf{z} \in \mathbb{C}^{F_{s,t,u}}$ by
\[
(\mathbf{z})_k = \frac{(\mathbf{z}_s)_k (\mathbf{z}_t)_k}{(\mathbf{z}_u)_k} \quad 
(k \in F_{s,t,u}).
\]
Then $\mathbf{z}^* \mathbf{z} \circ Q_{s,t}^u$
is the matrix of Krein parameters with respect to
$\mathbf{z}_s^* \mathbf{z}_s \circ \{\varepsilon_{i,j}^s\},
\mathbf{z}_t^* \mathbf{z}_t \circ \{\varepsilon_{i,j}^t\},
\mathbf{z}_u^* \mathbf{z}_u \circ \{\varepsilon_{i,j}^u\}$.
\end{proposition}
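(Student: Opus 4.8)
The plan is to verify directly that the scaled families satisfy the defining Hadamard expansion \eqref{krein2}, and then to track how the expansion coefficients transform under the scaling. Write $Z_s=\mathbf{z}_s^*\mathbf{z}_s$, $Z_t=\mathbf{z}_t^*\mathbf{z}_t$, and $Z_u=\mathbf{z}_u^*\mathbf{z}_u$, so that $(Z_s)_{i,j}=\overline{(\mathbf{z}_s)_i}(\mathbf{z}_s)_j$ and likewise for $t$ and $u$; each such entry has absolute value $1$ and is in particular invertible. I would first observe that $Z_s\circ\{\varepsilon_{i,j}^s\}$ is indeed a basis of matrix units for $\mathfrak{C}_s$: since the entries of $Z_s$ have modulus one and factor as $\overline{(\mathbf{z}_s)_i}(\mathbf{z}_s)_j$, the scaling preserves \eqref{eq_mat_u} and \eqref{eq_mat_u2}, and it does not alter the block containing each $\varepsilon_{i,j}^s$, so \eqref{HWThm8} persists; this is the converse of the computation in Lemma~\ref{uniqueness}. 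The same applies to $t$ and $u$, so Definition~\ref{def_kre_mat} is applicable to the three scaled bases.

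Next I would compute the Hadamard product. Applying \eqref{krein2} to the original bases, for $(i,j)\in F_s^2\cap F_t^2$ I obtain
\[
\bigl((Z_s)_{i,j}\varepsilon_{i,j}^s\bigr)\circ\bigl((Z_t)_{i,j}\varepsilon_{i,j}^t\bigr)
=\frac{(Z_s)_{i,j}(Z_t)_{i,j}}{\sqrt{|X_i||X_j|}}
\sum_{\substack{u\in S\\ F_u\ni i,j}}(Q_{s,t}^u)_{i,j}\,\varepsilon_{i,j}^u .
\]
Because $(Z_u)_{i,j}\neq 0$, I rewrite each $\varepsilon_{i,j}^u$ as $(Z_u)_{i,j}^{-1}\bigl((Z_u)_{i,j}\varepsilon_{i,j}^u\bigr)$, which re-expresses the $\mathfrak{C}_u$-component of the right-hand side in the scaled basis of $\mathfrak{C}_u$. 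Reading off the coefficient against the scaled basis element, the matrix of Krein parameters for the three scaled bases has $(i,j)$-entry $\dfrac{(Z_s)_{i,j}(Z_t)_{i,j}}{(Z_u)_{i,j}}(Q_{s,t}^u)_{i,j}$.

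Finally I would identify the scalar factor with $(\mathbf{z}^*\mathbf{z})_{i,j}$. Using the factorizations and the definition of $\mathbf{z}$,
\[
\frac{(Z_s)_{i,j}(Z_t)_{i,j}}{(Z_u)_{i,j}}
=\frac{\overline{(\mathbf{z}_s)_i}(\mathbf{z}_s)_j\,\overline{(\mathbf{z}_t)_i}(\mathbf{z}_t)_j}{\overline{(\mathbf{z}_u)_i}(\mathbf{z}_u)_j}
=\overline{\Bigl(\frac{(\mathbf{z}_s)_i(\mathbf{z}_t)_i}{(\mathbf{z}_u)_i}\Bigr)}\cdot\frac{(\mathbf{z}_s)_j(\mathbf{z}_t)_j}{(\mathbf{z}_u)_j}
=\overline{(\mathbf{z})_i}(\mathbf{z})_j=(\mathbf{z}^*\mathbf{z})_{i,j},
\]
so the scaled matrix of Krein parameters equals $\mathbf{z}^*\mathbf{z}\circ Q_{s,t}^u$, as claimed. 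The computation is essentially bookkeeping; the one place needing genuine care is the first step, namely confirming that the scaled families are bona fide bases of matrix units so that Definition~\ref{def_kre_mat} applies, and noting that it is exactly the unit-modulus hypothesis that makes $(Z_u)_{i,j}$ invertible and lets the scalar identity close up.
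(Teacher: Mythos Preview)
Your proof is correct and follows essentially the same route as the paper: expand the Hadamard product of the scaled elements via \eqref{krein2}, then insert and remove the factor $(\mathbf{z}_u^*\mathbf{z}_u)_{i,j}$ to re-express everything in the scaled basis of $\mathfrak{C}_u$. You are simply more explicit than the paper in two places---you check that the scaled families are genuine bases of matrix units (the paper takes this for granted), and you spell out the identity $\dfrac{(Z_s)_{i,j}(Z_t)_{i,j}}{(Z_u)_{i,j}}=(\mathbf{z}^*\mathbf{z})_{i,j}$, which the paper leaves as ``the result follows.''
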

\begin{proof}
By \eqref{krein2} and Definition~\ref{def_kre_mat}, we have
\begin{align*}
&(\mathbf{z}_s^* \mathbf{z}_s)_{i,j} \varepsilon_{i,j}^s
\circ (\mathbf{z}_t^* \mathbf{z}_t)_{i,j} \varepsilon_{i,j}^t
\\&= (\mathbf{z}_s^* \mathbf{z}_s)_{i,j}(\mathbf{z}_t^* \mathbf{z}_t)_{i,j}
(\varepsilon_{i,j}^s \circ \varepsilon_{i,j}^t) \\
&=\frac{ (\mathbf{z}_s^* \mathbf{z}_s)_{i,j}(\mathbf{z}_t^* \mathbf{z}_t)_{i,j}}{
\sqrt{|X_i||X_j|} }
 \sum_{ \substack{u \in S \\ F_u \ni i,j} }
(Q_{s,t}^u)_{i,j}\varepsilon_{i,j}^u \\
&= \frac{1}{\sqrt{|X_i||X_j|} }
\sum_{ \substack{u \in S \\ F_u \ni i,j} }
\frac{(\mathbf{z}_s^* \mathbf{z}_s)_{i,j}(\mathbf{z}_t^* \mathbf{z}_t)_{i,j}}
{(\mathbf{z}_u^* \mathbf{z}_u)_{i,j}}
(Q_{s,t}^u)_{i,j} (\mathbf{z}_u^* \mathbf{z}_u)_{i,j}\varepsilon_{i,j}^u.
\end{align*}
Thus the result follows.
\end{proof}

In particular, if $Q_{s,t}^u$ is positive semidefinite,
then $Z \circ Q_{s,t}^u$ is also positive semidefinite. 
Thus the positive semidefiniteness of $Q_{s,t}^u$ is independent of the choice of
bases of matrix units.

\begin{theorem}\label{thm1}
For any $s,t,u \in S$,
the condition \eqref{krein_con} holds 
if and only if
the matrix of Krein parameters $Q_{s,t}^u$ is positive semidefinite.
\end{theorem}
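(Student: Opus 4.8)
The plan is to show the equivalence between the general Krein condition \eqref{krein_con}, which quantifies over all positive semidefinite $B$ and $C$, and the single positive-semidefiniteness statement for $Q_{s,t}^u$. The key simplification comes from Lemma~\ref{zero}: since $q_{(i,j,s),(k,l,t)}^{(m,n,u)}$ vanishes unless $\eta(i,j,s)=\eta(k,l,t)=\eta(m,n,u)$, and since by the choice of support we have $\eta(i,j,s)=(i,j)$, the Krein parameter $q_{(i,j,s),(k,l,t)}^{(m,n,u)}$ can be nonzero only when $i=k=m$ and $j=l=n$. This means the huge sum \eqref{tilde} collapses dramatically: the $(m,n)$-entry of $\tilde Q_{s,t}^u(B,C)$ is simply $b_{m,n}\,c_{m,n}\,(Q_{s,t}^u)_{m,n}$, valid for $m,n\in F_{s,t,u}$ (and entries are constrained to those indices). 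Thus $\tilde Q_{s,t}^u(B,C)$ is nothing but the Hadamard product $B\circ C\circ Q_{s,t}^u$, suitably restricted to the index set $F_{s,t,u}$.

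First I would carry out this collapse carefully, writing $\tilde Q_{s,t}^u(B,C) = (B\circ C\circ Q_{s,t}^u)$ restricted to the common support $F_{s,t,u}$; here one must be attentive to index sets, since $B$ is indexed by $F_s$, $C$ by $F_t$, and the output by $F_u$, so only the common indices survive. Once this identification is made, the two directions follow from the Schur product theorem. For the backward direction, if $Q_{s,t}^u$ is positive semidefinite, then for any $B\in\mathcal P_{F_s}$ and $C\in\mathcal P_{F_t}$ the principal submatrices of $B$ and $C$ on $F_{s,t,u}$ are again positive semidefinite, and the Hadamard product of positive semidefinite matrices is positive semidefinite, giving \eqref{krein_con}.

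For the forward direction I would specialize. Taking $B=J_{F_s}$ and $C=J_{F_t}$ (both rank-one positive semidefinite all-ones matrices) yields $\tilde Q_{s,t}^u(J,J)=Q_{s,t}^u$ up to the restriction to $F_{s,t,u}$, so \eqref{krein_con} applied to this choice gives directly that $Q_{s,t}^u\in\mathcal P_{F_{s,t,u}}$. This completes the equivalence.

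The main obstacle I anticipate is purely bookkeeping rather than conceptual: one must track the three different index sets $F_s$, $F_t$, $F_u$ and verify that the restriction of the Hadamard product to $F_{s,t,u}$ is exactly what \eqref{tilde} computes, including checking that entries of $\tilde Q_{s,t}^u(B,C)$ indexed outside $F_{s,t,u}$ contribute nothing. A minor subtlety is confirming that restricting a positive semidefinite matrix to a principal submatrix preserves positive semidefiniteness (standard) so that the Schur product theorem applies on the common support; once the collapse of \eqref{tilde} via Lemma~\ref{zero} is established, the rest is the Schur product theorem together with the all-ones specialization.
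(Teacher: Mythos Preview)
Your proposal is correct and follows essentially the same approach as the paper: collapse \eqref{tilde} via Lemma~\ref{zero} to identify $\tilde Q_{s,t}^u(B,C)$ with the Hadamard product $B'\circ C'\circ Q_{s,t}^u$ on the common support $F_{s,t,u}$ (with zeros elsewhere), then use the all-ones specialization for the forward direction and the Schur product theorem (cited in the paper as \cite[Lemma~3.9]{BI}) for the converse. Your anticipated bookkeeping about the index sets and the vanishing of entries outside $F_{s,t,u}$ is exactly what the paper handles as well.
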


\begin{proof}
To prove this equivalence, we simplify \eqref{krein_con}.
Let $B = (b_{i,j}) \in \mathcal{P}_{F_s}, 
C =(c_{i,j}) \in \mathcal{P}_{F_t}$.
By Lemma~\ref{zero}, if $(m,n) \not \in F_s^2$ or
$(m,n) \not \in F_t^2$, then the $(m,n)$-entry \eqref{tilde}
of $\tilde{Q}_{s,t}^u(B,C)$ is $0$. 
If $(m,n) \in F_{s,t,u}^2$, then \eqref{tilde} is 
\[
 b_{m,n} c_{m,n}(Q_{s,t}^u)_{m,n} 
= (B' \circ C' \circ Q_{s,t}^u)_{m,n},
\]
where $B',C' \in \mathrm{M}_{F_{s,t,u}}(\mathbb{C})$ are
the principal submatrices of $B,C$ indexed by $F_{s,t,u}$.
Thus $\tilde{Q}_{s,t}^u(B,C)$ has $B' \circ C' \circ Q_{s,t}^u $
as a principal submatrix and all other entries are $0$.
This implies that $\tilde{Q}_{s,t}^u(B,C) \in \mathcal{P}_{F_u}$
if and only if
$B' \circ C' \circ Q \in \mathcal{P}_{F_{s,t,u}}$.
In particular, taking $B$ and $C$ to be the all-ones matrices,
\eqref{krein_con} implies $Q_{s,t}^u \in \mathcal{P}_{F_{s,t,u}}$.

Conversely, if $Q_{s,t}^u \in \mathcal{P}_{F_{s,t,u}}$, then
$B' \circ C' \circ Q_{s,t}^u \in \mathcal{P}_{F_{s,t,u}}$
for any $B \in \mathcal{P}_{F_s}, C \in \mathcal{P}_{F_t}$
by \cite[Lemma~3.9]{BI},
and \eqref{krein_con} holds.
\end{proof}

Hobart \cite{Ho} applied the Krein condition of the coherent configuration
defined by a quasi-symmetric design by setting $B$ and $C$ to be all-ones
matrices. She commented that there are no choices of $B,C$ 
which lead to other consequences.
Indeed, since the coherent configuration defined by a quasi-symmetric design
is fiber-commu\-tative,
considering the case $B=C=J$ is sufficient by Theorem~\ref{thm1}.

\section{Generalized quadrangles}

\begin{definition}
Let $P,L$ be finite sets and 
$I \subset P \times L$ be an incidence relation.
An incidence structure $(P,L,I)$ is 
called {\it a generalized quadrangle with parameters $(s,t)$} if 
\begin{enumerate}
\item for any $l \in  L$, $\# \{p \in P \mid (p,l )\in I \}=s+1$,
\item for any $ p \in P$, $\#\{\l \in L \mid (p,l)\in I \}= t+1$,
\item for any $p \in P$ and $l\in L$ with $(p,l) \not\in I$,
there exist unique $q \in P$ and unique $m \in L$ such that
$(p,m), (q,m) ,(q,l) \in I$.
\end{enumerate}
Elements of $P$ and $L$ are called {\it points } and {\it lines}, respectively.
\end{definition}

Let $(P,L,I)$ be a generalized quadrangle with parameters $(s,t)$. 
For $p,q \in P$, if there exists $l \in L$ such that $(p,l),(q,l) \in I$,
then we write $p \sim q$ and say that $p$ and $q$ are {\it collinear.}
Similarly, for $l,m \in L$, if there exists $p \in P$ such that
$(p,l),(p,m) \in I$,
then we write $l \sim m$ and say that $l$ and $m$ are {\it concurrent.}

In this section, we apply Theorem~\ref{thm1} to generalized quadrangles and
obtain the following inequalities:
If $s,t >1$, then $s \leq t^2$ and $t \leq s^2$ hold.
These inequalities are established in \cite{Hi3,Hi2}, as a consequence of
the Krein condition for the strongly regular graph defined by a generalized
quadrangle.
We also show that no other consequences can be obtained from Theorem~\ref{thm1}
by computing all matrices of Krein parameters.

First, we construct a coherent configuration from a generalized quadrangle.
Let $X_1=P$ and $X_2= L$ be fibers.
Adjacency relations on $X= X_1 \sqcup X_2$ are defined as
\begin{align*}
R_{1,1,1} &= \{(p,p) \mid p \in P \}, \\
R_{1,1,2} &= \{(p,q) \in P^2 \mid p \sim q,\; p \neq q  \}, \\
R_{1,1,3} &= \{(p,q) \in P^2 \mid p \not \sim q \}, \\
R_{1,2,1} &= \{(p,l) \in P \times L \mid (p,l) \in I \}, \\
R_{1,2,2} &= \{(p,l) \in P \times L \mid (p,l) \not \in I \}, \\
R_{2,1,1} &= \{(l,p) \in L \times P \mid (p,l) \in I \}, \\
R_{2,1,2} &= \{(l,p) \in L \times P \mid (p,l) \not \in I\}, \\
R_{2,2,1} &= \{(l,l) \mid l \in L \}, \\
R_{2,2,2} &= \{(l,m) \in L^2 \mid l \sim m,\; l \neq m \}, \\
R_{2,2,3} &= \{(l,m) \in L^2 \mid l \not\sim m \}.
\end{align*}
Then $\mathcal{C}=(X,\{R_I\}_{I \in \mathcal{I}})$ is a coherent configuration,
where $\mathcal{I} = \{(i,j,k) \mid 1 \leq i,j \leq 2,\; 1 \leq k \leq r_{i,j} \}$
and $r_{1,1}=r_{2,2}=3$, $r_{1,2}=r_{2,1}=2$.
Let $A_{i,j,k}$ be the adjacency matrix of the relation $R_{i,j,k}$, and let
$\mathfrak{A}$ be the adjacency algebra of $\mathcal{C}$.
Then $\mathfrak{A}$ is decomposed as
\[
\mathfrak{A} = \mathfrak{C}_1 \oplus \mathfrak{C}_2 \oplus \mathfrak{C}_3
\oplus \mathfrak{C}_4,
\]
where $\mathfrak{C}_1 , \mathfrak{C_2} \simeq \mathrm{M}_2 (\mathbb{C})$
and $\mathfrak{C}_3, \mathfrak{C}_4 \simeq \mathbb{C}$.
Moreover, $F_1 =F_2 = \{1,2\}$, $F_3=\{1 \}$, $F_4 =\{2 \}$.

For each $\mathfrak{C}_i$, a basis of matrix units can be expressed as follows:
For $\mathfrak{C}_1$,
\begin{align*}
\varepsilon_{1,1}^{1} &= \frac{1}{(st+1)(s+1)} (A_{1,1,1}+A_{1,1,2}+A_{1,1,3}), \\
\varepsilon_{2,2}^{1} &= \frac{1}{(st+1)(t+1)} (A_{2,2,1}+A_{2,2,2}+A_{2,2,3}), \\
\varepsilon_{1,2}^{1} &= \frac{1}{(st+1)\sqrt{(s+1)(t+1)}} (A_{1,2,1}+A_{1,2,2}), \\
\varepsilon_{2,1}^{1} &= \frac{1}{(st+1)\sqrt{(s+1)(t+1)}} (A_{2,1,1}+A_{2,1,2}). \\
\end{align*}
For $\mathfrak{C}_2$,
\begin{align*}
\varepsilon_{1,1}^{2} &= \frac{1}{(st+1)(s+t)} 
(st(t+1)A_{1,1,1}+t(s-1)A_{1,1,2}-(t+1) A_{1,1,3}), \\
\varepsilon_{2,2}^{2} &= \frac{1}{(st+1)(s+t)}
(st(s+1) A_{2,2,1}+s(t-1) A_{2,2,2}-(s+1)A_{2,2,3}), \\
\varepsilon_{1,2}^{2} &= \frac{1}{(st+1)\sqrt{(s+t)}}
(stA_{1,2,1}-A_{1,2,2}), \\
\varepsilon_{2,1}^{2} &= \frac{1}{(st+1)\sqrt{(s+t)}}
(stA_{2,1,1}-A_{2,1,2}). \\
\end{align*}
For $\mathfrak{C}_3$,
\[
\varepsilon_{1,1}^{3} = \frac{1}{(s+t)(s+1)} (s^2A_{1,1,1}-sA_{1,1,2} +A_{1,1,3}).
\]
For $\mathfrak{C}_4$,
\[
\varepsilon_{2,2}^{4} = \frac{1}{(s+t)(t+1)} (t^2A_{2,2,1}-tA_{2,2,2} +A_{2,2,3}).
\]

For these bases of matrix units,  the matrices of Krein parameters $Q_{3,3}^3$ and $Q_{4,4}^4$ are
the $1 \times 1$ matrices given by
\begin{align*}
Q_{3,3}^3 &= \frac{(st+1)(s-1)(s^2-t)}{(s+t)^2}, \\
Q_{4,4}^4 &= \frac{(st+1)(t-1)(t^2-s)}{(s+t)^2}.
\end{align*}
By Theorem~\ref{thm1},
both $Q_{3,3}^3$ and $Q_{4,4}^4$ are positive semidefinite,
so $s^2 \geq t$ and $t^2 \geq s$ hold, provided $s,t>1$.
The consequences of Theorem~\ref{thm1} for all other matrices
of Krein parameters are trivial. 
Indeed, the other matrices of Krein parameters are given as follows
(we omit those matrices determined by Proposition~\ref{prop:1st},
and those determined to be zero by Proposition~\ref{prop:st1}):
\begin{align*}
Q_{2,2}^1 &= \frac{st(s+1)(t+1)}{(s+t)} 
\begin{bmatrix}
1 & 1  \\
1& 1
\end{bmatrix},\\
Q_{2,2}^2 &= \frac{1}{(s+t)^2}
\begin{bmatrix}
\sigma(s,t) & \tau(s,t)  \\
\tau(s,t)& \sigma(t,s)
\end{bmatrix},
\end{align*}
where
\begin{align*}
\sigma(s,t)&=(s+1)(t^2 (st+2s-1) +s(st-2t-1)),\\
\tau(s,t)&=(s+t)^{3/2}(st-1)\sqrt{(s+1)(t+1)},
\end{align*}
and

\begin{tabular}{ll}
$\displaystyle Q_{2,2}^3 = \frac{t(st+1)(s+1)(t+1)}{(s+t)^2}$,
& $\displaystyle Q_{2,2}^4 = \frac{s(st+1)(s+1)(t+1)}{(s+t)^2}$, \\
$\displaystyle Q_{2,3}^2 = \frac{s(st+1)^2}{(s+t)^2}$, 
& $\displaystyle Q_{2,3}^3 = \frac{t(t+1)(s+1)^2(s-1)}{(s+t)^2}$, \\
$\displaystyle Q_{2,4}^2 = \frac{t(st+1)^2}{(s+t)^2}$, 
& $\displaystyle Q_{2,4}^4 = \frac{s(s+1)(t+1)^2(t-1)}{(s+t)^2}$, \\
$\displaystyle Q_{3,3}^1 =\frac{s^2(st+1)}{(s+t)}$,
&$\displaystyle Q_{3,3}^2 = \frac{s(st+1)(s+1)(s-1)}{(s+t)^2}$, \\
$\displaystyle Q_{4,4}^1 =\frac{t^2(st+1)}{(s+t)}$,
&$\displaystyle Q_{4,4}^2 = \frac{t(st+1)(t+1)(t-1)}{(s+t)^2}$.
\end{tabular}


\section{Absolute bounds for fiber-commutative coherent configurations}

Let $\mathfrak{A}$ be the adjacency algebra of
a coherent configuration $\mathcal{C}=(X,\{R_I\}_{I\in\mathcal{I}})$, and let
$\{\Delta_s \mid s\in S \}$ be a set of representatives
of all irreducible matrix representations of $\mathfrak{A}$ over $\mathbb{C}$
satisfying $\Delta_s(A)^*= \Delta_s(A^*)$ for any $A \in \mathfrak{A}$.
Denote by $h_s$ the multiplicity of $\Delta_s$ 
in the standard module $\mathbb{C}^X$.
In this section, we assume that $\Delta_s(\varepsilon_{i,j}^s)=E_{i,j}$
for a basis of matrix units
$\{ \varepsilon_{i,j}^s \}$ for $\mathfrak{C}_s$,
where $E_{i,j} $ is $e_s \times e_s$ matrix
with $(i,j)$-entry $1$ and all other entries $0$.
The following bound is known as the absolute bound.

\begin{lemma}[{\cite[Theorem~5]{HW}}] \label{abs}
For any $s,t \in S$, we have
\[
\sum_{u \in S} h_u \mathrm{rank}
\left(
\sum_{\lambda \in \Lambda_s} \sum_{\mu \in \Lambda_t} \sum_{\nu \in \Lambda_u}
q_{\lambda,\mu}^\nu \Delta_u(\varepsilon_\nu)
\right)
\leq 
\begin{cases}
h_s h_t & \text{ if } s \neq t, \\
\binom{h_s+1}{2} & \text{ if } s=t.
\end{cases}
\]
\end{lemma}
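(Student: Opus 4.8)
The plan is to identify the left-hand side with the rank of a single Hadamard product of matrices and then invoke the elementary bounds $\mathrm{rank}(A \circ B) \le \mathrm{rank}(A)\,\mathrm{rank}(B)$ and $\mathrm{rank}(A \circ A) \le \binom{\mathrm{rank}(A)+1}{2}$.

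For each $s \in S$ I would set $P_s = \sum_{\lambda \in \Lambda_s} n_\lambda \varepsilon_\lambda = \sum_{i,j \in F_s} \sqrt{|X_i||X_j|}\,\varepsilon_{i,j}^s \in \mathfrak{A}$, the second expression using $\eta(i,j,s)=(i,j)$ in the fiber-commutative case, so that $n_{(i,j,s)} = \sqrt{|X_i||X_j|}$. First I would expand $P_s \circ P_t = \sum_{\lambda \in \Lambda_s}\sum_{\mu\in\Lambda_t} n_\lambda n_\mu\,\varepsilon_\lambda \circ \varepsilon_\mu$ by the defining relation \eqref{def_krein}, obtaining
\[
P_s \circ P_t = \sum_{u \in S}\sum_{m,n \in F_u} n_{(m,n,u)}(M_u)_{m,n}\,\varepsilon_{m,n}^u,
\]
where $M_u = \sum_{\lambda \in \Lambda_s}\sum_{\mu\in\Lambda_t}\sum_{\nu\in\Lambda_u} q_{\lambda,\mu}^\nu \Delta_u(\varepsilon_\nu)$ is exactly the matrix appearing inside the rank in the statement, with $(m,n)$-entry $\sum_{\lambda,\mu} q_{\lambda,\mu}^{(m,n,u)}$ (recall $\Delta_u(\varepsilon_{m,n}^u)=E_{m,n}$).

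Next I would read off ranks from the isotypic decomposition of the standard module $\mathbb{C}^X \cong \bigoplus_{u \in S}(\Delta_u\text{-module})^{\oplus h_u}$. On the $\Delta_u$-isotypic component an element $\sum_{m,n} c_{m,n}\varepsilon_{m,n}^u$ acts as $(c_{m,n})_{m,n}\otimes I_{h_u}$ and annihilates every other component. Applying this to the displayed formula, $P_s \circ P_t$ acts on the $\Delta_u$-component as $(D_u M_u D_u)\otimes I_{h_u}$, where $D_u = \mathrm{diag}(\sqrt{|X_m|} : m \in F_u)$ is invertible; since rank is additive over the isotypic blocks and $\mathrm{rank}(D_u M_u D_u)=\mathrm{rank}(M_u)$, I obtain
\[
\mathrm{rank}(P_s \circ P_t) = \sum_{u \in S} h_u\,\mathrm{rank}(M_u),
\]
which is precisely the left-hand side of the asserted inequality. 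The same description gives $\mathrm{rank}(P_s)=h_s$: on the $\Delta_s$-component $P_s$ acts as $\mathbf{d}_s\mathbf{d}_s^{T}\otimes I_{h_s}$ with $\mathbf{d}_s = (\sqrt{|X_i|})_{i\in F_s}$, the tensor product of a rank-one matrix with $I_{h_s}$ (for $s=1$ this recovers $P_1 = J_X$ by \eqref{matunit}).

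Finally I would apply the Hadamard rank bounds. For $s \neq t$, $\mathrm{rank}(P_s\circ P_t) \le \mathrm{rank}(P_s)\,\mathrm{rank}(P_t) = h_s h_t$. For $s=t$, taking a rank decomposition $P_s = \sum_{p=1}^{h_s} u_p v_p^{T}$ and writing $P_s\circ P_s = \sum_{p,q}(u_p \circ u_q)(v_p \circ v_q)^{T}$, the symmetry $u_p\circ u_q = u_q \circ u_p$ merges the pairs $(p,q),(q,p)$ and leaves only $\binom{h_s+1}{2}$ rank-one summands, so $\mathrm{rank}(P_s\circ P_s)\le\binom{h_s+1}{2}$. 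The step I expect to be most delicate is the bookkeeping in the first two paragraphs: one must verify that the scalars $n_\nu$ produced by \eqref{def_krein} reassemble into the invertible conjugation $M_u \mapsto D_u M_u D_u$, which is what guarantees that each block rank is unchanged and hence that the left-hand side is genuinely $\mathrm{rank}(P_s\circ P_t)$.
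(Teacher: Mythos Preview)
The paper does not prove this lemma; it is quoted verbatim from \cite[Theorem~5]{HW} and used as a black box for the subsequent theorem. So there is no ``paper's own proof'' to compare against.

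Your argument is correct and is in fact the standard route to absolute bounds (and presumably what \cite{HW} does): realise the left-hand side as $\mathrm{rank}(P_s\circ P_t)$ for suitable $P_s\in\mathfrak{C}_s$ with $\mathrm{rank}(P_s)=h_s$, then apply the elementary Hadamard rank inequalities. The two computations you flag as delicate both go through. For the factorisation $n_{(m,n,u)}=(D_u)_{mm}(D_u)_{nn}$, note that in the general (not necessarily fiber-commutative) setting one should take $(D_u)_{mm}=\sqrt{|X_{k}|}$ where $k$ is the unique index with $\varepsilon_{m,m}^u\in\mathfrak{A}_k$ (this exists by \eqref{diag_block}, and then $\eta(m,n,u)=(k,l)$ with $k$ depending only on $m$ and $l$ only on $n$, by the proof of Lemma~\ref{lem:F'}); your formula $D_u=\mathrm{diag}(\sqrt{|X_m|}:m\in F_u)$ is the fiber-commutative specialisation after the re-indexing of Section~3. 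Since only invertibility of $D_u$ is used, this cosmetic point does not affect the proof. The rank identity $\mathrm{rank}(P_s)=h_s$ is exactly as you say: $P_s\in\mathfrak{C}_s$ acts as $\mathbf{d}_s\mathbf{d}_s^{\!T}\otimes I_{h_s}$ on the $\Delta_s$-isotypic block and as zero elsewhere.
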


For fiber-commu\-tative coherent configurations, we can simplify this inequality.

\begin{theorem}
Let $Q_{s,t}^u$ $(s,t,u\in S)$ be the 
matrices of Krein parameters for $\mathcal{C}$.
For any $s,t \in S$, we have
\[
\sum_{u \in S} h_u \mathrm{rank}(Q_{s,t}^u)
\leq 
\begin{cases}
h_s h_t & \text{ if } s \neq t, \\
\binom{h_s+1}{2} & \text{ if } s=t.
\end{cases}
\]
\end{theorem}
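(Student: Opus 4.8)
The plan is to deduce the statement directly from Lemma~\ref{abs} by showing that, in the fiber-commutative case, the matrix whose rank is summed there has the same rank as the matrix of Krein parameters $Q_{s,t}^u$. Write
\[
M_{s,t}^u = \sum_{\lambda\in\Lambda_s}\sum_{\mu\in\Lambda_t}\sum_{\nu\in\Lambda_u}
q_{\lambda,\mu}^\nu\,\Delta_u(\varepsilon_\nu)
\]
for the inner matrix in Lemma~\ref{abs}. Since the right-hand side of the claimed inequality is identical to that of Lemma~\ref{abs}, it suffices to prove $\mathrm{rank}(M_{s,t}^u)=\mathrm{rank}(Q_{s,t}^u)$ for every $u\in S$, after which substitution into Lemma~\ref{abs} finishes the argument.

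First I would compute $M_{s,t}^u$ entry by entry. By the standing assumption $\Delta_u(\varepsilon_{m,n}^u)=E_{m,n}$, the term indexed by $\nu=(m,n,u)$ contributes only to the $(m,n)$-entry, so for $p,q\in F_u$ we have
\[
(M_{s,t}^u)_{p,q}
=\sum_{a,b\in F_s}\sum_{c,d\in F_t} q_{(a,b,s),(c,d,t)}^{(p,q,u)}.
\]
Now I invoke Lemma~\ref{zero} together with the fiber-commutative normalization $\eta(i,j,s)=(i,j)$: the parameter $q_{(a,b,s),(c,d,t)}^{(p,q,u)}$ vanishes unless $\eta(a,b,s)=\eta(c,d,t)=\eta(p,q,u)$, that is, unless $(a,b)=(c,d)=(p,q)$. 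The double sum therefore collapses to a single term, giving $(M_{s,t}^u)_{p,q}=q_{(p,q,s),(p,q,t)}^{(p,q,u)}=(Q_{s,t}^u)_{p,q}$ when $p,q\in F_{s,t,u}$, and $(M_{s,t}^u)_{p,q}=0$ otherwise (in particular whenever $p$ or $q$ lies in $F_u\setminus F_{s,t,u}$, since then the required indices $a,b\in F_s$ or $c,d\in F_t$ cannot match).

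This identifies $M_{s,t}^u$ as the $e_u\times e_u$ matrix that agrees with $Q_{s,t}^u$ on the principal submatrix indexed by $F_{s,t,u}$ and is zero on all remaining rows and columns. Since adjoining zero rows and columns to a matrix leaves its rank unchanged, $\mathrm{rank}(M_{s,t}^u)=\mathrm{rank}(Q_{s,t}^u)$, and the theorem follows. I expect no serious obstacle here; the one point requiring care is the bookkeeping between the full index set $F_u$ of the representation $\Delta_u$ and the smaller set $F_{s,t,u}$ indexing $Q_{s,t}^u$, which is precisely what makes the zero-padding—and hence the rank equality—work. The essential input is Lemma~\ref{zero} under the fiber-commutative identification of $\eta$, which forces the triple sum to be diagonal in the index positions.
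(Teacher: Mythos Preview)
Your proposal is correct and follows essentially the same route as the paper: both reduce the triple sum in Lemma~\ref{abs} via Lemma~\ref{zero} (equivalently \eqref{krein_imp}) to recover $Q_{s,t}^u$ padded by zero rows and columns, and then invoke the rank equality. If anything, your version is slightly more careful than the paper's about distinguishing the index sets $F_u$ and $F_{s,t,u}$, but the argument is the same.
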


\begin{proof}
By \eqref{krein_imp}, for any $u \in S$, we have
\begin{align*}
\sum_{\lambda \in \Lambda_s} \sum_{\mu \in \Lambda_t} \sum_{\nu \in \Lambda_u}
q_{\lambda,\mu}^\nu \Delta_u(\varepsilon_\nu)
&=\sum_{i,j \in F_u}
q_{(i,j,s),(i,j,t)}^{(i,j,u)} \Delta_u(\varepsilon_{i,j}^u) \\
&= \sum_{i,j \in F_u}
(Q_{s,t}^u)_{i,j} E_{i,j},
\end{align*}
and the rank of this matrix is $\mathrm{rank}(Q_{s,t}^u)$.
By Lemma~\ref{abs}, the result follows.
\end{proof}


\end{document}